\theoremstyle{plain}
\newtheorem*{thm*}{Theorem}
\newtheorem{thm}{Theorem}
\crefname{thm}{Theorem}{Theorems}
\Crefname{thm}{Theorem}{Theorems}
\newtheorem*{lem*}{Lemma}
\newtheorem{lem}[thm]{Lemma}
\crefname{lem}{Lemma}{Lemmas}
\Crefname{lem}{Lemma}{Lemmas}
\newtheorem*{claim*}{Claim}
\crefname{claim}{Claim}{Claims}
\Crefname{claim}{Claim}{Claims}
\crefname{prop}{Proposition}{Propositions}
\Crefname{prop}{Proposition}{Propositions}
\crefname{cor}{Corollary}{Corollaries}
\Crefname{cor}{Corollary}{Corollaries}
\crefname{conj}{Conjecture}{Conjectures}
\Crefname{conj}{Conjecture}{Conjectures}
\newtheorem{qn}[thm]{Question}
\crefname{qn}{Question}{Questions}
\Crefname{qn}{Question}{Questions}
\crefname{obs}{Observation}{Observations}
\Crefname{obs}{Observation}{Observations}
\crefname{ex}{Example}{Examples}
\Crefname{ex}{Example}{Examples}
\theoremstyle{definition}
\crefname{prob}{Problem}{Problems}
\Crefname{prob}{Problem}{Problems}
\crefname{defn}{Definition}{Definitions}
\Crefname{defn}{Definition}{Definitions}
\theoremstyle{remark}
\crefname{rem}{Remark}{Remarks}
\Crefname{rem}{Remark}{Remarks}
\xpatchcmd{\proof}{\itshape}{\normalfont\proofnamefont}{}{}
\newcommand{\proofnamefont}{}
\renewcommand{\proofnamefont}{\bfseries}
\newcommand{\remove}[1]{}
\newcommand{\floor}[1]{
    \left\lfloor #1 \right\rfloor
}
\DeclareMathOperator{\coords}{co}
\DeclareMathOperator{\dens}{dens}
\DeclareMathOperator{\exx}{ex}
\newcommand{\HH}{\mathcal{H}}
\newcommand{\C}{\mathcal{C}}
\newcommand{\eps}{\varepsilon}
\newcommand{\lam}{\lambda}
\newcommand{\excr}{\exx_r(n, \C)}
\title{Hypergraphs with no tight cycles}
\author{
	    Shoham Letzter\thanks{
		Department of Mathematics, 
		University College London, 
		Gower Street, London WC1E~6BT, UK. 
		Email: \texttt{s.letzter}@\texttt{ucl.ac.uk}. 
		Research supported by the Royal Society.
    }
}
\begin{document}

\date{}
\maketitle

\begin{abstract}

	\setlength{\parskip}{\medskipamount}
    \setlength{\parindent}{0pt}
    \noindent

	We show that every $r$-uniform hypergraph on $n$ vertices which does not contain a tight cycle has at most $O(n^{r-1} (\log n)^5)$ edges. 
	This is an improvement on the previously best-known bound, of $n^{r-1} e^{O(\sqrt{\log n})}$, due to Sudakov and Tomon, and our proof builds on their work. A recent construction of B.\ Janzer implies that our bound is tight up to an $O((\log n)^4 \log \log n)$ factor.

\end{abstract}

\section{Introduction} \label{sec:intro}

	It is well known, and easy to see, that the maximum number of edges in a graph on $n$ vertices with no cycles is $n-1$. It is natural to consider an analogous problem for hypergraphs: what is the maximum possible number of edges in an $r$-uniform hypergraph (henceforth \emph{$r$-graph}) on $n$ vertices which does not contain a cycle? Unlike the graph case, there are multiple natural notions of cycles in hypergraphs, the most notable of which are Berge cycles, loose cycles and tight cycles. 

	A \emph{Berge cycle} of length $\ell$ is a sequence $(v_1, e_1, \ldots, v_{\ell}, e_{\ell})$ such that $v_1, \ldots, v_{\ell}$ are distinct vertices, $e_1, \ldots, e_{\ell}$ are distinct edges, and $v_i \in e_{i-1} \cap e_i$ (subtraction of indices is taken modulo $\ell$). We claim that the maximum possible number of edges in an $n$-vertex $r$-graph with no Berge cycles is $\floor{\frac{n-1}{r-1}}$. For the upper bound, it suffices to show that the edges of an $r$-graph with no Berge cycles can be ordered as $e_1, \ldots, e_m$ so that $|e_i \cap (e_1 \cup \ldots \cup e_{i-1})| \le 1$ for every $i \in [m]$, which is not hard to prove. To see the lower bound, form an $r$-graph on at most $n$ vertices by taking $\floor{\frac{n-1}{r-1}}$ pairwise disjoint sets of size $r-1$, and joining each of them to the same new vertex.

	A \emph{loose cycle} of length $\ell$ is a sequence $(e_1, \ldots, e_{\ell})$ of distinct edges such that two consecutive edges (as well as the first and last) have exactly one vertex in common, and non-consecutive edges are disjoint. Frankl and F\"uredi \cite{frankl1987exact} showed that any $n$-vertex $r$-graph with no loose triangles (i.e.\ loose cycles of length $3$) has at most $\binom{n-1}{r-1}$ edges, whenever $n$ is sufficiently large. Note that there exists an $n$-vertex $r$-graph with no loose cycles with this number of edges: take its edges to be all $r$-sets that contain a certain vertex $u$. It thus follows that the answer to the above question for loose cycles is $\binom{n-1}{r-1}$. 

	An $r$-uniform \emph{tight cycle} of length $\ell$ is a sequence $(v_1, \ldots, v_{\ell})$ of distinct vertices, satisfying that $(v_i, \ldots, v_{i+r-1})$ is an edge for every $i \in [\ell]$ (with addition of indices taken modulo $\ell$). Denote the family of all tight cycles by $\C$, and let $\excr$ be the maximum possible number of edges in an $n$-vertex $r$-graph with no tight cycles. The question from the first paragraph, for tight cycles, can be restated as follows: what is $\excr$?

	It might be tempting to guess that $\excr = \binom{n-1}{r-1}$, similarly to the loose cycles case. Indeed, this was conjectured by S\'os and, independently, Verstra\"ete (see \cite{verstraete2016extremal,mubayi2011hypergraph}). This conjecture was disproved by Huang and Ma \cite{huang2019tight}, who showed that for every $r$ there exists $c = c(r) \in (1, 2)$ such that $\excr \ge c \cdot \binom{n-1}{r-1}$. Very recently, B.\ Janzer improved this lower bound on $\excr$ substantially, showing that $\excr = \Omega(n^{r-1} \cdot \frac{\log n}{\log \log n})$.

	Until recently, the best upper bound on $\excr$ for general $r$ was $\excr = O(n^{r - 2^{-(r-1)}})$, which follows from a result of Erd\H{o}s \cite{erdos1964extremal} about the extremal number of a complete $r$-partite $r$-graph with vertex classes of size $2$. For $r = 3$, an unpublished result of Verstra\"ete regarding the extremal number of a tight cycle of length $24$ implies that $\exx_3(n, \C) = O(n^{5/2})$. A recent result of Tomon and Sudakov \cite{sudakov2020extremal} shows that $\excr \le n^{r-1} e^{O(\sqrt{\log n})}$, greatly improving on previous bounds, and thus establishing that $\excr = n^{r-1 + o(1)}$. 

	We prove the following result about the extremal number of tight cycles in $r$-graphs, which lowers the $e^{O(\sqrt{\log n})}$ error term in Sudakov and Tomon's bound to a polylogarithmic term.
		
	\begin{thm} \label{thm:main}
		Suppose that $\HH$ is an $r$-graph on $n$ vertices which does not contain a tight cycle. Then $\HH$ has $O(n^{r-1} (\log n)^5)$ edges.
	\end{thm}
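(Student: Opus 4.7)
The plan is to prove the contrapositive: any $r$-graph $\HH$ on $n$ vertices with at least $Cn^{r-1}(\log n)^5$ edges, for a suitable constant $C = C(r)$, must contain a tight cycle. I would follow the general architecture of Sudakov and Tomon---grow a tree of tight paths from a fixed starting $(r-1)$-tuple and then close a tight cycle by a pigeonhole on endpoint tuples---but replace their iterative doubling (which loses a factor of $e^{O(\sqrt{\log n})}$ spread over $\Theta(\sqrt{\log n})$ rounds) by a more global argument whose total overhead is only polylogarithmic.

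First I would pass, via a dyadic pigeonhole on codegrees of $(r-1)$-sets together with repeated removal of $(r-1)$-sets of low codegree, to a subhypergraph $\HH'$ in which every nonzero codegree of an $(r-1)$-set lies in a dyadic window $[D, 2D]$, with $D$ at least a sufficiently large polylogarithmic function of $n$, at a cost of only an $O(\log n)$ factor in the edge count. Fix an $(r-1)$-tuple $T_0$ of positive codegree in $\HH'$ and consider the rooted tree of tight paths emanating from $T_0$: a partial tight path of length $t$ ends in an $(r-1)$-tuple $T$, and may be extended through any vertex in the codegree link of $T$ that does not already lie on the path. Since the path uses $t + r - 1$ vertices and the link has size at least $D$, the branching factor is $(1-o(1))D$ as long as $t \ll D$. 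Picking the depth $\ell$ so that $D^\ell$ comfortably exceeds $n^{r-1}$ and applying pigeonhole on endpoint tuples yields two distinct tight paths $P_1, P_2$ of length $\ell$ from $T_0$ ending in a common $(r-1)$-tuple $T^\ast$. If $P_1$ and $P_2$ are internally vertex-disjoint, traversing one forward and the other in reverse produces a closed tight walk through distinct vertices---a tight cycle.

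The main obstacle is that colliding tight paths need not be internally disjoint: $P_1$ and $P_2$ typically share a long prefix, and the unweighted pigeonhole gives no control over how many interior vertices they have in common. The heart of the proof must therefore be a weighted or randomised refinement which guarantees a colliding pair with small interior overlap. Natural attempts include assigning each path a weight depending on its vertex set and applying a weighted pigeonhole at the endpoint tuple, or randomising the starting tuple and the choice of extensions used, and then arguing that a random colliding pair has few shared internal vertices with positive probability. The delicate combinatorial accounting needed to trade off the branching loss, the forbidden-vertex loss, and the shared-prefix loss so that the total overhead is $(\log n)^{O(1)}$ rather than the quasipolynomial $e^{O(\sqrt{\log n})}$ of Sudakov--Tomon is where the new idea of the paper presumably lies, and is by far the step I expect to be hardest.
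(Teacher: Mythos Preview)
Your proposal is not a proof: you explicitly locate the crux (``a weighted or randomised refinement which guarantees a colliding pair with small interior overlap'') and then say this ``is where the new idea of the paper presumably lies'', without supplying it. Everything before that point is standard regularisation and branching, and everything after it is speculation. So there is a genuine gap, and it is exactly the step that carries the entire improvement from $e^{O(\sqrt{\log n})}$ to $(\log n)^{O(1)}$.

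Moreover, the architecture you sketch is not the one the paper uses. The paper does \emph{not} grow a tree of tight paths from a fixed $(r-1)$-tuple and pigeonhole on endpoint tuples. It stays inside the Sudakov--Tomon $r$-line-graph/expander framework: pass to an $r$-partite subhypergraph, form the associated $r$-line-graph $G$, and extract a $(\lambda,d)$-expander $G'$ with $\lambda \asymp 1/\log n$ and $d$ the density. The new content is a single lemma (\Cref{lem:robust-expanding}): in such an expander, from any vertex $x$ one can reach almost every $y$ by a short $\sigma$-path $P(x,y)$ \emph{so that no coordinate outside $\coords(x)$ lies on more than $n/t$ of these paths}. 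This is proved by a maximality argument: if few $y$ are reached, the set $F$ of ``overused'' coordinates is small (a double count on $\sum_y |\coords(P(x,y))|$), deleting $F$ preserves expansion (\Cref{lem:robust-expander}), and then the non-robust reachability lemma of Sudakov--Tomon (\Cref{lem:expanding}) produces a new $y$ together with a path avoiding $F$, contradicting maximality.

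Given this robust fan of paths, the cycle is built by concatenation rather than endpoint collision: for each $y$ one defines a small ``bad'' set $F(y)$ of coordinates overused by paths \emph{into} $y$, finds short $\sigma$-paths $Q(y,x)$ avoiding $F(y)$ for almost all $x$, and then shows by counting that for most pairs $(x,y)$ there is a $z$ with $P(x,z)P(z,y)Q(y,x)$ a $\sigma$-cycle. The robustness bound is precisely what guarantees that the three pieces can be made coordinate-disjoint. None of this matches your picture of two colliding tight paths with controlled overlap; in particular, there is no weighting of paths and no randomisation of starts or extensions.
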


	In other words, we show that $\excr = O(n^{r-1} (\log n)^5)$. In light of Janzer's result \cite{janzer2020large}, this is tight up to an $O((\log n)^4 \log \log n)$ factor. 

	We give an overview of our proof in \Cref{sec:overview}, mention relevant tools and definitions from \cite{sudakov2020extremal} in \Cref{sec:prelims}, and prove our main result in \Cref{sec:cycles-in-expanders}. We conclude the paper in \Cref{sec:conclusion} with some closing remarks. Throughout the paper, logarithms are understood to be in base $2$, and floor and ceiling signs are often dropped.

\section{Overview of the proof} \label{sec:overview}

	Our proof builds on ideas Sudakov and Tomon's work \cite{sudakov2020extremal}. They introduce the notions of $r$-line-graphs, which are graphs that correspond naturally to $r$-partite $r$-graphs, and expansion in such graphs. They show that, given a dense enough $r$-partite $r$-graph $\HH$, the $r$-line-graph that corresponds to $\HH$ contains a dense expander $G$. Next, they define $\sigma$-paths and $\sigma$-cycles, which correspond to tight paths and cycles in the original hypergraph $\HH$. It thus suffices to show that every $r$-line-graph which is a dense expander contains a $\sigma$-cycle. Sudakov and Tomon are not able to prove this. Instead, they show that every expander contains either a $\sigma$-cycle or a very dense subgraph, and proceed via a density increment argument. 

	Our main contribution is to show that every $r$-line-graph which is a dense expander indeed contains a $\sigma$-cycle (see \Cref{thm:cycles-in-expanders}). A key step in our proof is to show that in such an expander $G$, for every vertex $x \in V(G)$, almost every other vertex $y \in V(G)$ can be reached from $x$ via a short $\sigma$-path $P(x, y)$ in a `robust' way, meaning that no vertex in the underlying $r$-graph $\HH$ meets too many of the paths $P(x, y)$ (see \Cref{lem:robust-expanding}). If the robustness requirement is dropped, we obtain a lemma from \cite{sudakov2020extremal}. To prove the robust version, we use the non-robust version from \cite{sudakov2020extremal} as a black box, along with another lemma from the same paper, which asserts that the removal of a small number of vertices from the underlying $r$-graph $\HH$ does not ruin the expansion. 

	To find a $\sigma$-cycle, let $P(x, y)$ be paths as above, defined for almost every $x, y \in V(G)$. Note that while we are guaranteed that, for every $x \in V(G)$, no vertex $v$ of $\HH$ meets too many paths $P(x, y)$, we do not have any control over the number of times $v$ meets a path $P(x, y)$, for a given $y$. Nevertheless, since the paths $P(x, y)$ are short, for every $y \in V(G)$ there are few vertices in $\HH$ that meet many path $P(x, y)$; denote the set of such vertices in $\HH$ by $F(y)$. Using tools mentioned above, for every $y$ and almost every $x$ there is a short $\sigma$-path $Q(y, x)$ from $y$ to $x$ that avoids $F(y)$. To complete the proof, we note that the robustness implies that for almost every $x, y \in V(G)$ the path $Q(y, x)$ is defined, and there are linearly many $z \in V(G)$ for which $P(x, z) P(z, y)$ is a $\sigma$-path from $x$ to $y$. Using robustness and the choice of $Q(y, x)$, the concatenation $P(x, z) P(z, y) Q(y, x)$ is a $\sigma$-cycle for linearly many $z \in V(G)$. 

\section{Expansion in $r$-line-graphs} \label{sec:prelims}

	We say that $G$ is an \emph{$r$-line-graph} if the vertex set of $G$ is a set of $r$-tuples in $A_1 \times \ldots \times A_r$, where $A_1, \ldots, A_r$ are pairwise disjoint, and $x$ and $y$ are joined by an edge if and only if $x$ and $y$ differ in exactly one coordinate. Observe that an $r$-partite $r$-graph naturally corresponds to an $r$-line-graph. 

	Let $G$ be an $r$-line-graph with $V(G) \subseteq A_1 \times \ldots \times A_r$.
	We will refer to the vertices of $A_1 \cup \ldots \cup A_r$ as \emph{coordinates}. For a set of vertices $X$ in $G$, let $\coords(X)$ be the set of coordinates that appear in tuples in $X$. For a vertex $x$ we write $\coords(x)$ as a shorthand for $\coords(\{x\})$.

	For a vertex $x$ and $i \in [r]$, define $N^{(i)}(x)$ to be the set of vertices $y$ in $G$ that differ from $x$ in the $i$-th coordinate only. An \emph{$i$-block} in $G$ is a set of form $\{x\} \cup N^{(i)}(x)$, for $x \in V(G)$ and $i \in [r]$. Let $p(G)$ be the number of blocks in $G$, and define the \emph{density} of $G$, denoted $\dens(G)$, as 
	\begin{equation} \label{eqn:density}
		\dens(G) = \frac{\sum_B|B|}{p(G)} = \frac{r|G|}{p(G)},
	\end{equation}
	where the sum is over all blocks $B$ in $G$. In words, the density is the average size of a block.

	The \emph{$i$-degree} of a vertex $x$, denoted $d_G^{(i)}(x)$, is defined to be $|N^{(i)}(x)| + 1$. The \emph{minimum degree} of $G$, denoted $\delta(G)$, is defined to be the minimum of $d^{(i)}(x)$, over $x \in V(G)$ and $i \in [r]$ (this is not quite the same as the usual notion of a minimum degree of a graph).

	For a graph $H$, say that $H$ is a \emph{$\lam$-expander} if every set of vertices $X$ with $|X| \le \frac{1}{2}|H|$ satisfies $|N(X)| \ge \lam |X|$, where $N(X)$ is the set of vertices in $V(H) \setminus X$ that are neighbours of at least one vertex in $X$. For an $r$-line-graph $G$, say that $G$ is a \emph{$(\lam, d)$-expander} if $G$ is a $\lam$-expander and $\delta(G) \ge d$. 

	The following lemma from \cite{sudakov2020extremal} allows us to find expanders in $r$-line-graphs that are sufficiently dense. It is reminiscent of a similar result of Shapira and Sudakov \cite{shapira2015small} about the existences of expanders in graphs.

	\begin{lem}[Lemma 3.2 in \cite{sudakov2020extremal}] \label{lem:existence-expanders}
		Let $G$ be an $r$-line-graph on $n$ vertices with density at least $d$, and suppose that $0 < \lam \le \frac{1}{2\log n}$. Then $G$ contains a subgrah of density at least $d(1 - \lam \log n)$ which is a $(\lam, \frac{d}{2r})$-expander.
	\end{lem}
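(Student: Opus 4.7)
The plan is to construct the desired subgraph via an iterative deletion procedure. Setting $G_0 = G$, at each step $t \ge 0$ we stop if $G_t$ is already a $(\lam, d/(2r))$-expander; otherwise we identify a witness $X_t$ to one of the two possible failures and delete it, forming $G_{t+1} = G_t \setminus X_t$. Specifically, if some block $B$ of $G_t$ has size less than $d/(2r)$ we take $X_t = B$ (\emph{Case~A}), and otherwise we take $X_t$ to be a set with $|X_t| \le |G_t|/2$ and $|N_{G_t}(X_t)| < \lam |X_t|$ (\emph{Case~B}). Since $|X_t| \ge 1$ throughout, the process terminates, and the final graph $G_T$ has the required expansion and minimum-degree properties by construction.

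To bound the density drop I would track $\dens(G_t)$ via the identity
\[
\frac{\dens(G_{t+1})}{\dens(G_t)} \;=\; \frac{1 - |X_t|/|G_t|}{1 - p_{X_t}/p(G_t)},
\]
where $p_{X_t}$ counts the blocks of $G_t$ contained entirely in $X_t$ (and so destroyed by the deletion); this follows directly from $\dens(G) = r|G|/p(G)$. In Case~A we have $|X_t| < d/(2r)$ and $p_{X_t} \ge 1$, so using the invariant $\dens(G_t) \ge d/2$ (consistent with the target bound since $\lam \log n \le 1/2$) one checks that $p_{X_t}/p(G_t) \ge |X_t|/|G_t|$, and hence Case~A never decreases the density. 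In Case~B, every block of $G_t$ has size at least $d/(2r)$; moreover every block of $G_t$ meeting $X_t$ is contained in $X_t \cup N_{G_t}(X_t)$, and any such block that \emph{survives} the deletion must contain a vertex of $N_{G_t}(X_t)$, so there are at most $r|N_{G_t}(X_t)| < r\lam |X_t|$ surviving blocks meeting $X_t$. Combining this with the identity $\sum_{B \cap X_t \neq \emptyset}|B \cap X_t| = r|X_t|$ and the block-size lower bound, one deduces $p_{X_t}/p(G_t) \ge (1 - O(\lam)) \cdot |X_t|/|G_t|$.

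Taking logarithms and telescoping the product of per-step density ratios then gives
\[
\log \frac{\dens(G_0)}{\dens(G_T)} \;\le\; \sum_{t \in \text{Case B}} O(\lam)\cdot\bigl(-\log(1 - |X_t|/|G_t|)\bigr),
\]
and since $\sum_t -\log(1 - |X_t|/|G_t|) = \log(n/|G_T|) \le \log n$, the total log-density loss is $O(\lam \log n)$, yielding $\dens(G_T) \ge d(1 - O(\lam \log n))$. The main technical obstacle is to sharpen the Case~B estimate so that the hidden constant is exactly $1$ and the bound matches $d(1 - \lam \log n)$ as stated; this requires a careful accounting of the block sizes destroyed, exploiting both the expansion violation $|N_{G_t}(X_t)| < \lam |X_t|$ and the minimum-degree lower bound that is maintained by the interleaved Case~A steps.
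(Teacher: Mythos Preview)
The paper does not actually prove this lemma; it is imported verbatim from Sudakov and Tomon and used as a black box, so there is no in-paper argument to compare against. That said, your proposed proof has a real gap in Case~B. You assert that deleting a non-expanding set $X_t$ gives $p_{X_t}/p(G_t) \ge (1 - O(\lam))\,|X_t|/|G_t|$, but the ingredients you list cannot produce any lower bound on $p_{X_t}$ at all: the identity $\sum_{B \cap X_t \neq \emptyset}|B \cap X_t| = r|X_t|$ together with $|B| \ge d/(2r)$ yields only the \emph{upper} bound $p_{X_t} \le 2r^2|X_t|/d$, and knowing that at most $r\lam|X_t|$ blocks through $X_t$ survive says nothing about how many are destroyed. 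For a concrete failure, take $G_t$ to be the disjoint union of two $r$-line-graphs $H_1, H_2$ with $|H_1| = |H_2| = |G_t|/2$, both satisfying $\delta \ge d/(2r)$, but with $\dens(H_1) \gg \dens(H_2)$. Then $X_t = V(H_1)$ has $N_{G_t}(X_t) = \emptyset$, so it is a legitimate Case~B witness for every $\lam > 0$, yet deleting it drops the density from roughly the harmonic mean of $\dens(H_1),\dens(H_2)$ down to $\dens(H_2)$ --- a loss entirely unrelated to $\lam$.

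The remedy used in the Sudakov--Tomon proof (and in the Shapira--Sudakov argument it is modelled on) is to recurse two-sidedly: when expansion fails at $X$, consider both $G' = G_t[X \cup N_{G_t}(X)]$ and $G'' = G_t \setminus X$, and pass to whichever has higher density. Because every block meeting $X$ lies inside $X \cup N_{G_t}(X)$, the only blocks counted in both $p(G')$ and $p(G'')$ are those meeting $N_{G_t}(X)$, so $p(G') + p(G'') \le p(G_t) + r|N_{G_t}(X)|$, while $|G'| + |G''| \ge |G_t|$; this forces one side to retain almost all the density, and since each Case~B step that passes to $G'$ at least halves the vertex count, the losses telescope to $O(\lam \log n)$. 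Your Case~A analysis is essentially correct apart from the mildly circular appeal to the invariant $\dens(G_t) \ge d/2$, which is easy to straighten out.
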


	The following lemma, also from \cite{sudakov2020extremal}, shows that the notion of expansion is robust, in the sense that the removal of a small number of coordinates does not affect the expansion too much.

	\begin{lem}[Lemma 3.3 in \cite{sudakov2020extremal}] \label{lem:robust-expander}
		Let $r, u, d$ be positive integers, let $\lam \in (0, 1)$ and suppose that $u \le \frac{\lam d}{4r}$. Let $G$ be an $r$-line-graph on $n$ vertices with $V(G) \subseteq A_1 \times \ldots \times A_r$ which is a $(\lam, d)$-expander. Suppose that $H$ is a subgraph of $G$ obtained by removing at most $u$ coordinates in $A_1 \cup \ldots \cup A_r$ from $G$ (along with edges of $G$ that meet these coordinates). Then $H$ is an $r$-line graph on at least $(1 - \frac{u}{d})n$ vertices which is a $(\frac{\lam}{2}, \frac{d}{2})$-expander.
	\end{lem}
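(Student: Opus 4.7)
The plan is to establish the three conclusions by exploiting the correspondence between the $r$-line-graph $G$ and the underlying $r$-partite $r$-graph $\HH$ on parts $A_1,\ldots,A_r$, whose edges are the vertices of $G$.

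\textbf{Vertex count and minimum degree.} For a coordinate $c\in A_i$ let $V_c\subseteq V(G)$ denote the edges of $\HH$ through $c$; I claim $|V_c|\le n/d$, which (summed over the at most $u$ removed coordinates) gives $|V(H)|\ge (1-u/d)n$. For each $e\in V_c$ the $(r-1)$-tuple $s_e=e\setminus\{c\}\in\prod_{j\ne i}A_j$ has $\HH$-codegree at least $d$, since $\delta(G)\ge d$; and distinct $s_e\ne s_{e'}$ have disjoint extension sets in $\HH$ (any extending $r$-tuple uniquely determines its projection to the non-$i$ coordinates), so summing yields $d\cdot|V_c|\le|E(\HH)|=n$. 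For the minimum degree $\delta(H)\ge d/2$, fix $x\in V(H)$ and $i\in[r]$ and let $B$ be the $i$-block of $x$ in $G$, with $|B|\ge d$. All vertices of $B$ agree with $x$ on the $r-1$ coordinates outside position $i$, and these coordinates survive because $x$ does; hence a vertex of $B$ is deleted only if its $i$-th coordinate is among the at most $u$ removed coordinates in $A_i$, leaving a surviving block of size at least $d-u\ge d-\lambda d/(4r)\ge d/2$.

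\textbf{Expansion.} Let $X\subseteq V(H)$ with $|X|\le|V(H)|/2\le n/2$, and set $T=V(G)\setminus V(H)$, so $|T|\le un/d$ by the vertex count. Since edges of $H$ are edges of $G$ restricted to $V(H)$, we have $N_H(X)=N_G(X)\cap V(H)$, and the $\lambda$-expansion of $G$ gives $|N_H(X)|\ge|N_G(X)|-|T|\ge\lambda|X|-un/d$. This is at least $(\lambda/2)|X|$ whenever $|X|\ge 2un/(\lambda d)$, and the hypothesis $u\le\lambda d/(4r)$ makes this threshold at most $n/(2r)$. For smaller $X$ I would invoke the minimum degree from the previous step: each vertex of $H$ has graph-degree $\sum_i(d_H^{(i)}(x)-1)\ge r(d/2-1)$, so for any $x_0\in X$ one has $|N_H(X)|\ge|N_H(x_0)\setminus X|\ge r(d/2-1)-|X|+1$, which already exceeds $(\lambda/2)|X|$ when $|X|$ is on the order of $rd$.

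\textbf{Main obstacle.} The delicate regime is the intermediate range $|X|\in(rd/3,n/(2r))$, which is nonempty when $d$ is much smaller than $n/r^2$. Here neither the single-vertex min-degree bound nor the direct $G$-expansion (after the $|T|$ penalty) suffices on its own. The natural route is to apply expansion of $G$ to the enlarged set $X\cup T$, whose size is at most $|X|+un/d\le n/2$, yielding $|N_G(X\cup T)|\ge\lambda|X\cup T|\ge\lambda|X|$, and then to bound the ``parasitic'' vertices $y\in V(H)\setminus X$ that are $G$-adjacent only to $T$ (and not to $X$). The key structural observation is that a vertex $t\in T$ with exactly one removed coordinate, say in $A_{i_0}$, has its entire $V(H)$-neighbourhood confined to the single $i_0$-block of $t$, while a $t\in T$ with at least two removed coordinates has no $V(H)$-neighbour at all (because each of its remaining blocks necessarily contains some other removed coordinate in its fixed positions). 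Combined with $|T|\le un/d$ and the $r$-partite block structure, this should control the parasitic contribution and close the gap, giving the required $(\lambda/2)|X|$ lower bound across the full range.
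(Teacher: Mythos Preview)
The paper does not prove this lemma; it is quoted verbatim as Lemma~3.3 of \cite{sudakov2020extremal} and used as a black box. So there is no ``paper's proof'' to compare against here.

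Turning to the correctness of your proposal: the vertex-count argument (each coordinate lies in at most $n/d$ vertices of $G$, because distinct vertices through a fixed $c\in A_i$ lie in distinct $i$-blocks, each of size at least $d$) and the minimum-degree argument (a surviving $i$-block loses at most $u$ vertices) are both correct.

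The expansion argument, however, has a genuine gap that your own ``main obstacle'' paragraph does not close. You reduce to bounding the number of \emph{parasitic} vertices $y\in V(H)\setminus X$ with a $G$-neighbour in $T$ but none in $X$, and you correctly observe that any such $y$ lies in the $i_0$-block of some $t\in T$ with a single removed coordinate. But you give no bound on how many such $y$ there are: the union of these blocks over all $t\in T_1$ can be enormous (each block has size $\ge d/2$ in $H$, and there can be up to $un/d$ of them), so ``combined with $|T|\le un/d$ and the block structure'' is not a proof. No case split on $|X|$ is actually needed. The clean route is to bound $|N_G(X)\cap T|$ directly: every $y\in N_G(X)\cap T$ lies in an $i$-block $B$ that meets $X$ (for some $i$) and has $y_i\in F$. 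For fixed $i$ the $i$-blocks meeting $X$ are pairwise disjoint, each has size $\ge d$, and they are contained in $X\cup N_G(X)$, so there are at most $(|X|+|N_G(X)|)/d$ of them; each contributes at most $|F\cap A_i|$ such $y$. Summing over $i$ gives
\[
|N_G(X)\cap T|\;\le\;\frac{|X|+|N_G(X)|}{d}\sum_i |F\cap A_i|\;\le\;\frac{u}{d}\bigl(|X|+|N_G(X)|\bigr).
\]
Since $|X|\le |V(H)|/2\le n/2$ we have $|N_G(X)|\ge\lambda|X|$, whence
\[
|N_H(X)|\;=\;|N_G(X)|-|N_G(X)\cap T|\;\ge\;\Bigl(\lambda\bigl(1-\tfrac{u}{d}\bigr)-\tfrac{u}{d}\Bigr)|X|\;\ge\;\frac{\lambda}{2}|X|,
\]
the last step using $u/d\le\lambda/(4r)$ and $\lambda<1$. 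This single estimate replaces your three-regime analysis and removes the unresolved intermediate range.
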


	Next, we need the notions of $\sigma$-neighbours, $\sigma$-paths and $\sigma$-cycles. Let $G$ be an $r$-line-graph with $V(G) \subseteq A_1 \times \ldots \times A_r$. Given a permutation $\sigma \in S_r$ and vertices $x = (x_1, \ldots, x_r)$ and $y = (y_1, \ldots, y_r)$ in $G$, we say that $y$ is a \emph{$\sigma$-neighbour} of $x$ if $\coords(x)$ and $\coords(y)$ are disjoint, and the $r$-tuples $z_0, \ldots, z_{r}$, defined as follows, are vertices in $G$.
	\begin{equation*}
		(z_i)_{j} = \left\{
			\begin{array}{ll}
				x_{j} & \sigma^{-1}(j) > i \\
				y_{j} & \sigma^{-1}(j) \le i.
			\end{array}
		\right.
	\end{equation*}
	Note that $z_0 = x$ and $z_r = y$. If $\sigma$ is the identity permutation, we have $z_i = (y_1, \ldots, y_{i-1}, x_i, \ldots, x_r)$. Observe that $z_{i} \in N^{(\sigma(i))}(z_{i-1})$ for $i \in [r]$. Also note that if $y$ is a $\sigma$-neighbour of $x$ then the sequence $(x_{\sigma(1)}, \ldots, x_{\sigma(r)}, y_{\sigma(1)}, \ldots, y_{\sigma(r)})$ is a tight path in the $r$-graph that corresponds to $G$.\footnote{For the purpose of this paper it suffices to fix $\sigma$ to be any particular permutation in $S_r$. We state the definitions and results for general $\sigma$ to mirror \cite{sudakov2020extremal}.}

	A \emph{$\sigma$}-path in $G$ is a sequence $(x_1, \ldots, x_k)$ of vertices in $G$ whose coordinate sets are pairwise disjoint, and such that $x_{i+1}$ is a $\sigma$-neighbour of $x_{i}$ for $i \in [k-1]$. Similarly, a \emph{$\sigma$-cycle} is a sequence $(x_1, \ldots, x_k)$ of vertices in $G$ whose coordinate sets are pairwise disjoint, such that $x_{i+1}$ is a $\sigma$-neighbour of $x_i$, for $i \in [k]$ (with indices taken modulo $k$). Writing $x_i = (x_{i,1}, \ldots, x_{i,r})$, if $x_1, \ldots, x_r$ is a $\sigma$-path ($\sigma$-cycle), then $(x_{1, \sigma(1)}, \ldots, x_{1, \sigma(r)}, \ldots, x_{k, \sigma(1)}, \ldots, x_{k, \sigma(r)})$ is a tight path (cycle) in the $r$-graph corresponding to $G$. It would thus be useful to show that $r$-line-graphs that are dense expanders have $\sigma$-cycles; we do so in \Cref{thm:cycles-in-expanders} below. 

	The \emph{order} of a $\sigma$-path or $\sigma$-cycle $(x_1, \ldots, x_k)$ is $k$. If there is a $\sigma$-path $(x_1, \ldots, x_k)$ in $G$, we say that \emph{$x_k$ can be reached from $x_1$} by a $\sigma$-path of order $k$. The following lemma from \cite{sudakov2020extremal} shows that, given a vertex $x$ in an $r$-line-graph $G$ which is a dense expander, almost every vertex in $G$ can be reached from $x$ by a relatively short $\sigma$-path.

	\begin{lem}[Lemma 4.2 in \cite{sudakov2020extremal}] \label{lem:expanding}
		Let $\sigma \in S_r$, let $\eps, \lam \in (0,1)$ and let $n$ and $d$ be positive integers such that $500r^4 \log n < \eps^2 \lam^2 d$. Suppose that $G$ is an $r$-line-graph on $n$ vertices which is a $(\lam, d)$-expander, and let $x \in V(G)$. Then at least $(1 - \eps)n$ vertices in $G$ can be reached from $x$ by a $\sigma$-path of order at most $\frac{5r\log n}{\eps \lam}$.
	\end{lem}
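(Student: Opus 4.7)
The plan is a BFS-style growth argument. For each $k \ge 1$, let $R_k \subseteq V(G)$ be the set of vertices reachable from $x$ by a $\sigma$-path of order at most $k$; so $R_1 = \{x\}$. With $K = \tfrac{5r\log n}{\eps\lam}$, the target is $|R_K| \ge (1-\eps)n$. The central step is a growth claim: if $|R_k| < (1-\eps)n$, then $|R_{k+1}| \ge (1+c)|R_k|$ for some constant $c = c(\lam, r, \eps) > 0$ chosen so that $K$ iterations suffice to push $|R_k|$ past $(1-\eps)n$; iterating this claim gives the result.

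To effect a single step, fix $y \in R_k$ together with a chosen $\sigma$-path $P(y)$ of order $\le k$ from $x$ to $y$, and set $C(y) := \coords(P(y))$, a set of at most $rK$ forbidden coordinates. An extension of $P(y)$ amounts to finding a $\sigma$-neighbour $y'$ of $y$ with $\coords(y') \cap C(y) = \emptyset$; unpacking the definition of $\sigma$-neighbour, this becomes finding a chain $y = z_0, z_1, \ldots, z_r = y'$ with $z_j \in N^{(\sigma(j))}(z_{j-1}) \subseteq V(G)$ whose new coordinates avoid $C(y)$. Since $|C(y)| \le rK$, the hypothesis $500r^4\log n < \eps^2\lam^2 d$ gives $rK \le \tfrac{\lam d}{4r}$, so \Cref{lem:robust-expander} applied with $u = rK$ says that deleting these forbidden coordinates preserves a $(\lam/2, d/2)$-expander on $(1 - o(1))n$ vertices, within which such a chain certainly exists for each individual $y$ by the minimum-degree condition.

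The real difficulty is getting growth of $R_k$, not merely finding one extension per $y$. I plan to execute the expansion substep-by-substep at the level of the entire set $R_k$: splitting the graph neighbourhood as $N(X) = \bigcup_i N^{(i)}(X)$ by coordinate direction, the expansion condition yields some direction with $|N^{(i)}(X) \setminus X| \ge \lam|X|/r$ (when $|X| \le n/2$), so chaining in the directions $\sigma(1), \ldots, \sigma(r)$ grows the reachable set by roughly $(1+\lam/r)^r$ per round. The main obstacle is that the forbidden set $C(y)$ depends on $y$, so \Cref{lem:robust-expander} cannot be used globally once for all extensions; one must argue, via aggregate counting, that the total number of vertices excluded across all $y \in R_k$ is small enough that a constant fraction of the newly reached vertices correspond to valid $\sigma$-extensions. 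In the regime $|R_k| > n/2$, the expansion condition instead applies to the complement $V(G) \setminus R_k$ of size $\ge \eps n$, yielding a slower gain of order $\eps\lam/r$; this accounts for the $1/\eps$ factor in the target $K$. Combining both regimes and iterating $K$ rounds yields $|R_K| \ge (1-\eps)n$.
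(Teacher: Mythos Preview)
The paper does not prove this lemma: it is quoted from Sudakov and Tomon (as Lemma~4.2 there) and used as a black box in the proofs of \Cref{lem:robust-expanding} and \Cref{thm:cycles-in-expanders}. There is therefore no in-paper argument to compare your proposal against.

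On the substance of your sketch, the BFS framework and the two-regime split (with the extra $1/\eps$ once $|R_k|>n/2$) are the right shape, and your check that $rK \le \lambda d/(4r)$ correctly shows the forbidden coordinate sets are small enough for \Cref{lem:robust-expander}. But there is a genuine gap in the growth step. You note that expansion gives \emph{some} direction $i$ with $|N^{(i)}(X)\setminus X|\ge \lambda|X|/r$, and then assert that ``chaining in the directions $\sigma(1),\ldots,\sigma(r)$ grows the reachable set by roughly $(1+\lambda/r)^r$ per round''. These are in tension: pigeonhole hands you one good direction, whereas a $\sigma$-step forces the \emph{specific} sequence $\sigma(1),\ldots,\sigma(r)$, and nothing you wrote produces growth in a prescribed coordinate. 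The minimum-degree hypothesis gives many choices in each fixed direction but not \emph{new} vertices; the $\lambda$-expander hypothesis gives new vertices but is direction-agnostic. Bridging this---showing that the sets obtained after the successive prescribed substeps genuinely grow, while simultaneously handling the $y$-dependent forbidden sets $C(y)$---is precisely where the work lies, and your ``aggregate counting'' remark does not yet supply it. As written, the claimed inequality $|R_{k+1}|\ge(1+c)|R_k|$ does not follow.
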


\section{Existence of $\sigma$-cycles in expanders} \label{sec:cycles-in-expanders}

	Recall that $\coords(X)$, where $X$ is a set of vertices in an $r$-line-graph, is the set of coordinates in tuples in $X$. The following key lemma is the first new ingredient in our proof. It shows that for every vertex $x$ in an $r$-line-graph $G$ which is a dense expander, almost every vertex in $G$ can be reached from $x$ by a short $\sigma$-path, such that no coordinate (other than the coordinates in $x$) is met by too many such $\sigma$-paths.

	\begin{lem} \label{lem:robust-expanding}
		Let $\sigma \in S_r$, let $\eps, \lam \in (0,1)$ and let $n, d, \ell, t$ be positive integers such that $\ell = \frac{10r \log n}{\eps \lam}$, $t \le \frac{\lam d}{4r^2 \ell}$, $4000r^4 \log n < \eps^2 \lam^2 d$ and $\frac{\lam}{4r} \le \eps$. Suppose that $G$ is an $r$-line-graph on $n$ vertices, with $V(G) \subseteq A_1 \times \ldots \times A_r$, which is a $(\lam, d)$-expander, and let $x \in V(G)$. Then there is a set $Y \subseteq V(G)$ of size at least $(1 - 2\eps)n$ such that every $y \in Y$ can be reached from $x$ by a $\sigma$-path $P(y)$ of order at most $\ell$, and every $w \in (A_1 \cup \ldots \cup A_r) \setminus \coords(x)$ is in $\coords(P(y))$ for at most $\frac{n}{t}$ values of $y$. 
	\end{lem}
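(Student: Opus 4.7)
My plan is a greedy construction: I build $Y$ and $\{P(y)\}_{y \in Y}$ one vertex at a time, maintaining throughout that no coordinate outside $\coords(x)$ has yet been used by too many of the selected paths.

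Initialize $Y = \emptyset$. At any moment let
\[
	F = \bigl\{w \in (A_1 \cup \ldots \cup A_r) \setminus \coords(x) : w \in \coords(P(y)) \text{ for at least } n/t \text{ vertices } y \in Y\bigr\}.
\]
Each selected $\sigma$-path uses at most $r\ell$ coordinates, so double counting gives $|F| \cdot (n/t) \le |Y| \cdot r\ell \le nr\ell$, hence $|F| \le r\ell t \le \lambda d/(4r)$ by the hypothesis $t \le \lambda d/(4r^2\ell)$. Therefore \Cref{lem:robust-expander} applies to the coordinate-deletion $H := G - F$: the graph $H$ is a $(\lambda/2,\, d/2)$-expander on at least $(1 - \lambda/(4r))\,n \ge (1-\eps)n$ vertices, using $\lambda/(4r) \le \eps$.

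At each step I apply \Cref{lem:expanding} to $H$ with parameters $(\lambda/2, d/2)$ and error $\eps$---its hypothesis $500 r^4 \log n < \eps^2(\lambda/2)^2(d/2) = \eps^2 \lambda^2 d/8$ is implied by $4000 r^4 \log n < \eps^2 \lambda^2 d$---obtaining a set $R \subseteq V(H)$ of at least $(1-\eps)|V(H)| \ge (1-\eps)^2 n \ge (1-2\eps)n$ vertices reachable from $x$ in $H$ by $\sigma$-paths of order at most $5r\log n/(\eps \cdot \lambda/2) = \ell$. If $R \subseteq Y$ I halt; otherwise I pick any $y \in R \setminus Y$, fix such a path as $P(y) \subseteq H$, add $y$ to $Y$, and iterate. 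The process halts in at most $n$ steps, and at that point $R \subseteq Y$ together with $|R| \ge (1-2\eps)n$ gives $|Y| \ge (1-2\eps)n$. The coordinate bound is automatic from the construction: every $P(y)$ is chosen while lying in $H = G - F$, so each $w \in \coords(P(y))$ satisfies $w \notin F$ at that moment, meaning $w$ had been used fewer than $n/t$ times; hence its count is at most $n/t$ after $P(y)$ is added, and this bound is preserved for the rest of the process.

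The chief technical point is to keep $|F| \le \lambda d/(4r)$ at every iteration so that \Cref{lem:robust-expander} and \Cref{lem:expanding} can be invoked consistently, and the pigeonhole bound $|F| \le r\ell t$ combined with the hypothesis $t \le \lambda d/(4r^2\ell)$ is exactly what delivers this; the remaining hypotheses are calibrated so that, even after halving $\lambda$ and $d$, the error term from \Cref{lem:expanding} still lies within $\eps$.
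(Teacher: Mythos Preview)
Your argument is correct and is essentially the same as the paper's: the paper phrases it as a maximality argument (take $Y_0$ of maximum size satisfying the conclusion, and if $|Y_0| < (1-2\eps)n$ derive a contradiction by finding one more admissible path), whereas you unroll this into an explicit greedy iteration, but the underlying mechanism --- bound $|F|$ by double counting, delete $F$ via \Cref{lem:robust-expander}, then apply \Cref{lem:expanding} to extend $Y$ --- is identical. One small point you leave implicit is that $x \in V(H)$ at every step, which follows since your $F$ is defined to exclude $\coords(x)$; the paper states this explicitly.
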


	\begin{proof}
		Write $A = A_1 \cup \ldots \cup A_r$ and $u = r\ell t$. So $u \le \frac{\lam d}{4r}$ and $\frac{u}{d} \le \eps$.

		Let $Y_0$ be a subset of $V(G)$ of maximum size for which there exists a collection of $\sigma$-paths $(P(y))_{y \in Y_0}$, such that $P(y)$ is a $\sigma$-path from $x$ to $y$ of order at most $\ell$ for $y \in Y_0$, and every $w \in A \setminus \coords(x)$ is in $\coords(P(y))$ for at most $\floor{\frac{n}{t}}$ values of $y$; fix such a collection $(P(y))_{y \in Y_0}$. Our task is to show that $|Y_0| \ge (1 - 2\eps)n$, so suppose otherwise.

		Let $F$ be the set of coordinates $w \in A \setminus \coords(x)$ such that $w \in \coords(P(y))$ for exactly $\frac{n}{t}$ values of $y \in Y_0$. By choice of $F$ and the upper bound on the order of $P(y)$, we have
		\begin{equation} \label{eqn:bad-coords}
			\frac{|F|n}{t} \le \sum_{y \in Y_0} |\coords(P(y))| \le r\ell n.
		\end{equation}
		It follows that $|F| \le r\ell t = u$. 

		Let $H$ be the graph obtained from $G$ by removing the vertices that meet the set $F$. By \Cref{lem:robust-expander}, $H$ is an $r$-line-graph on at least $(1 - \frac{u}{d})n \ge (1 - \eps)n$ vertices which is a $(\frac{\lam}{2}, \frac{d}{2})$-expander. Note that $x$ is in $H$ because $F$ is disjoint from $\coords(x)$. Thus, by \Cref{lem:expanding}, there is a subset $Y_1 \subseteq V(H)$, with $|Y_1| \ge (1 - \eps)|H| \ge (1 - \eps)^2n \ge (1 - 2\eps)n$, such that the vertices in $Y_1$ can be reached from $x$ by a $\sigma$-path in $H$ of order at most $\ell$ (here we use the inequality $500 r^4 \log |H| \le 500 r^4 \log n < \eps^2 \left( \frac{\lam}{2} \right)^2 \left( \frac{d}{2} \right)$). By assumption on the size of $Y_0$, there is a vertex $y \in Y_1 \setminus Y_0$. Let $P(y)$ be a $\sigma$-path in $H$ from $x$ to $y$ whose order is at most $\ell$; so $P(y)$ is a path in $G$ that avoids $F$. It follows that every $w \in A \setminus \coords(x)$ is in $\coords(P(y))$ for at most $\frac{n}{t}$ values of $y$ in $Y_0 \cup \{y\}$. This is a contradiction to the maximality of $Y_0$. Thus $|Y_0| \ge (1 - 2\eps)n$, as required.
	\end{proof}

	We now prove the main ingredient in our proof, namely that $r$-line-graphs which are dense expanders contain (short) $\sigma$-cycles.

	\begin{thm} \label{thm:cycles-in-expanders}
		Let $\sigma \in S_r$, let $\eps, \lam \in (0, 1)$, and let $n$ and $d$ be positive integers such that $d \ge \frac{4000r^5 (\log n)^2}{\eps^3 \lam^3}$, $\frac{\lam}{4r} \le \eps < \frac{1}{12}$ and $n$ is sufficienlty large. Let $G$ be an $r$-line-graph on $n$ vertices which is a $(\lam, d)$-expander. Then $G$ contains a $\sigma$-cycle of order at most $\frac{30r\log n}{\eps \lam}$. 
	\end{thm}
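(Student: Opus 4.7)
The plan is to apply \Cref{lem:robust-expanding} at every starting vertex to obtain a family of short robust forward $\sigma$-paths; to define short return $\sigma$-paths using \Cref{lem:robust-expander} and \Cref{lem:expanding}; and then to count triples $(x, y, z)$ that stitch together into a $\sigma$-cycle of the form $P(x, z)\,P(z, y)\,Q(y, x)$.

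\emph{Setup of the paths.} I would take $\ell := \frac{10 r \log n}{\eps \lam}$ and $t := \frac{\lam d}{4 r^2 \ell}$; the lower bound on $d$ in the hypothesis makes $r\ell/t$ and $r\ell/d$ each far smaller than $\eps$. For every $x \in V(G)$, applying \Cref{lem:robust-expanding} produces a set $Y(x) \subseteq V(G)$ with $|Y(x)| \ge (1 - 2\eps) n$ together with, for each $y \in Y(x)$, a $\sigma$-path $P(x, y)$ of order at most $\ell$ from $x$ to $y$ such that no coordinate $w \notin \coords(x)$ lies in more than $n/t$ of these paths. For each $y \in V(G)$, define
\[
 F(y) := \{ w \notin \coords(y) : |\{z : y \in Y(z),\, w \in \coords(P(z, y))\}| \ge n/t \}.
\]
A double count gives $|F(y)| \le r\ell t = \tfrac{\lam d}{4r}$, so by \Cref{lem:robust-expander} the graph $H(y) := G \setminus F(y)$ is a $(\lam/2, d/2)$-expander on at least $(1 - \eps)n$ vertices, and it contains $y$. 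Applying \Cref{lem:expanding} inside $H(y)$ then yields a set $Z(y) \subseteq V(H(y))$ with $|Z(y)| \ge (1 - 2\eps) n$ and, for every $x \in Z(y)$, a $\sigma$-path $Q(y, x)$ of order at most $\ell$ from $y$ to $x$ whose coordinates avoid $F(y)$.

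\emph{Counting good triples.} Call $(x, y, z) \in V(G)^3$ \emph{good} if (i) $z \in Y(x)$, $y \in Y(z)$ and $x \in Z(y)$; (ii) $\coords(x), \coords(y), \coords(z)$ are pairwise disjoint; and (iii) the three pairwise overlaps among $\coords(P(x, z)), \coords(P(z, y)), \coords(Q(y, x))$ equal $\coords(z), \coords(y), \coords(x)$ respectively. A good triple gives a $\sigma$-cycle of order at most $3 \ell = \tfrac{30 r \log n}{\eps \lam}$. Clause (i) fails on at most $O(\eps) n^3$ triples by the size bounds on $Y$ and $Z$. Clause (ii) fails on at most $O(rn^3/d) = o(n^3)$ triples, since \Cref{lem:robust-expander} with $u = 1$ implies that every coordinate lies in at most $n/d$ vertices. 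For the three overlap conditions in (iii), the key is robustness: fixing $x, z$ with $z \in Y(x)$, each of the at most $r\ell$ coordinates of $\coords(P(x, z)) \setminus \coords(z)$ lies in $\coords(P(z, y))$ for at most $n/t$ values of $y$, by the robustness at $z$; fixing $x, y$ with $x \in Z(y)$, every coordinate in $\coords(Q(y, x)) \setminus \coords(y)$ lies outside $F(y)$ (by the choice of $Q(y, x)$) and hence lies in $\coords(P(z, y))$ for at most $n/t$ values of $z$; and the overlap of $Q(y, x)$ with $P(x, z)$ is bounded symmetrically using robustness at $x$. Each of the three overlap conditions therefore contributes at most $r\ell n^3/t = O(\eps) n^3$ bad triples, so the total number of good triples is at least $(1 - O(\eps)) n^3 > 0$ by the hypothesis $\eps < 1/12$, and any good triple gives the required $\sigma$-cycle.

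\emph{Main obstacle.} The most delicate point is controlling the coordinate overlap between the two forward paths $P(x, z)$ and $P(z, y)$: the robustness of $\{P(x, \cdot)\}$ alone says nothing about pairs of paths drawn from different families. It is exactly at this step that \Cref{lem:robust-expanding} must be applied at the shared middle vertex $z$, so that its robustness controls how many $y$ can force $P(z, y)$ to use any fixed coordinate of $P(x, z)$. This extra strength over \Cref{lem:expanding} is what allows the $\sigma$-cycle to be found directly, rather than through the density-increment strategy of Sudakov and Tomon.
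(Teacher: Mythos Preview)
Your proposal is correct and follows essentially the same approach as the paper: apply \Cref{lem:robust-expanding} at every vertex to get robust forward paths $P(x,\cdot)$, define $F(y)$ as the set of coordinates overused by paths $P(\cdot,y)$, obtain return paths $Q(y,\cdot)$ inside $G\setminus F(y)$ via \Cref{lem:robust-expander} and \Cref{lem:expanding}, and then stitch a cycle $P(x,z)P(z,y)Q(y,x)$. The only cosmetic difference is that the paper packages the final step through an auxiliary digraph $D(x)$ (encoding when $P(x,y)P(y,z)$ concatenates cleanly) and then picks a pair $(x,y)$ with large in-degree, whereas you count good triples $(x,y,z)$ directly; the underlying estimates are identical.
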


	\begin{proof}
		Let $A_1, \ldots, A_r$ be disjoint sets such that $V(G) \subseteq A_1 \times \ldots \times A_r$ and write $A = A_1 \cup \ldots \cup A_r$. Let $u = \frac{\lam d}{4r}$, write $\ell = \frac{10r \log n}{\eps \lam}$ and let $t = \frac{u}{r\ell}$. We claim that the following inequalities hold: $\frac{u}{d} \le \eps$ and $\frac{r \ell}{t} \le \eps$. The former is easy to check by the definition of $u$ and the lower bound on $\eps$. The latter is more tedious but follows directly from the choices of $u, \ell, t$ and the lower bound on $d$.

		For each vertex $x$ in $G$, let $Y(x) \subseteq V(G)$ be a set of size at least $(1 - 2\eps)n$ and let $P(x, y)$ be a $\sigma$-path of length at most $\ell$ in $G$ from $x$ to $y$, for $y \in Y(x)$, such that 
		\begin{equation} \label{eqn:good}
			\text{every $w \in A \setminus \coords(x)$ is in $\coords(P(x,y))$ for at most $\frac{n}{t}$ vertices $y$ in $Y(x)$, for $x \in V(G)$.}
		\end{equation}
		Such set $Y(x)$ and paths $P(x, y)$ exist by \Cref{lem:robust-expanding}.
		For each vertex $y$ in $G$, let $F(y)$ be the set of elements $w \in A \setminus \coords(y)$ that appear in more than $\frac{n}{t}$ sets $\coords(P(x, y))$ with $x \in V(G)$. Using a calculation as in \eqref{eqn:bad-coords}, it is easy to see that $|F(y)| \le u$ for every $y \in V(G)$. Let $G(y)$ be the graph obtained from $G$ by removing all vertices that meet $F(y)$. It follows from \Cref{lem:robust-expander} that $G(y)$ is an $r$-line-graph on at least $(1 - \frac{u}{d})n \ge (1 - \eps)n$ vertices, and it is also a $(\frac{\lam}{2}, \frac{d}{2})$-expander. By \Cref{lem:expanding}, there is a subset $X(y)$ of $V(G(y))$ with $|X(y)| \ge (1 - \eps)^2 n \ge (1 - 2\eps)n$, and $\sigma$-paths $Q(y, x)$ in $G(y)$ from $y$ to $x$ whose order is at most $\ell$, for $x \in X(y)$.

		Consider a vertex $x$ in $G$. Let $D(x)$ be a directed graph on vertices $V(G)$ where $yz$ is an edge if paths $P(x, y)$ and $P(y, z)$ are defined and $\coords(P(x,y)) \cap \coords(P(y, z)) = \coords(y)$; equivalently, $yz$ is an edge if the concatenation of $P(x, y)$ and $P(y, z)$ forms a $\sigma$-path in $G$ from $x$ to $z$. Given $y$ for which $P(x, y)$ is defined, the number of vertices $z$ for which $P(y, z)$ is defined but $yz$ is not an edge in $D(x)$ is at most $\frac{r\ell n}{t} \le \eps n$, by \eqref{eqn:good}. Since $P(y, z)$ is defined for at least $(1 - 2\eps)n$ vertices $z$, this implies that every vertex in $Y(x)$ has out-degree at least $(1 - 3\eps)n$ in $D(x)$. It follows that the number of edges in $D(x)$ is at least $(1 - 2\eps)n \cdot (1 - 3\eps) n \ge (1 - 5\eps)n^2$, and thus there are at least $(1 - 10\eps)n$ vertices in $G$ with in-degree at least $\frac{n}{2}$ in $D(x)$. 

		The previous paragraph implies that the number of pairs $(x, y)$ with $x, y \in V(G)$, such that $y$ has in-degree at least $\frac{n}{2}$ in $D(x)$, is at least $(1 - 10\eps)n^2$. Recall that the number of pairs $(x, y)$ with $x, y \in V(G)$, such that $Q(y, x)$ is defined, is at least $(1 - 2\eps)n^2$. It follows that there are at least $(1 - 12\eps)n^2$ pairs $(x, y)$ such that $y$ has in-degree at least $\frac{n}{2}$ in $D(x)$ and $Q(y, x)$ is defined. We claim that every such pair yields a $\sigma$-cycle in $G$ that passes through $x$ and $y$. 
			
		To see this, fix a pair $(x, y)$ as in the previous paragraph. Write $S = \coords(Q(y, x)) \setminus (\coords(x) \cup \coords(y))$. Then $|S| \le r\ell$, and $S$ is disjoint of $F(y)$, by choice of $Q(y, x)$. Let $Z$ be the in-neighbourhood of $y$ in $D(x)$; so $|Z| \ge \frac{n}{2}$. We claim that there is a vertex $z$ in $Z$ such that $P(x, z)$ and $P(z, y)$ both avoid $S$. To see this, first note that, by \eqref{eqn:good}, there are at most $\frac{r\ell n}{t} \le \eps n$ vertices $z$ in $Z$ such that $P(x, z)$ intersects $S$. Similarly, as $S$ is disjoint of $F(y)$ and by choice of $F(y)$,  there are at most $\frac{r \ell n}{t} \le \eps n$ vertices $z$ in $Z$ such that $P(z, y)$ meets $S$. It follows that there are at least $|Z| - 2\eps n \ge \frac{n}{4}$ vertices $z \in Z$ such that $\coords(P(x,y))$ and $\coords(P(y, z))$ are disjoint of $S$. Fix such $z$. The concatenation $P(x, z) P(z, y) Q(y, x)$ is a $\sigma$-cycle in $G$ (of order at most $3\ell$).
	\end{proof}

	Finally, we prove our main result, \Cref{thm:main}. It follows easily from the results above.

	\begin{proof}[Proof of \Cref{thm:main}]
		Let $\HH$ be an $r$-graph on $N$ vertices which does not contain a tight cycle. 
		By considering a random partition of $V(\HH)$ into $r$ parts, we can find an $r$-partite subgraph $\HH'$ of $\HH$ with at least $\frac{r!}{r^r} \cdot e(\HH)$ edges. 

		Write $e(\HH') = d N^{r-1}$, $n = e(\HH')$, $\lam = \frac{1}{2\log n}$ and $\eps = \frac{1}{20}$. Consider the $r$-line-graph $G$ that corresponds to $\HH'$. Then $\dens(G) = \frac{rn}{p(G)} \ge \frac{rdN^{r-1}}{rN^{r-1}} = d$ (see \eqref{eqn:density}). By \Cref{lem:existence-expanders}, there is a subgraph $G'$ of $G$ which is an $r$-line-graph and a $(\lam, \frac{d}{2r})$-expander; denote $m = |G'|$. By \Cref{thm:cycles-in-expanders}, we find that 
		\begin{equation*}
			\frac{d}{2r} < \frac{4000r^5 (\log m)^2}{\eps^3 \lam^3} 
			\le 2^8 \cdot 10^6 \cdot r^5 (\log n)^5.
		\end{equation*}
		Indeed, otherwise \Cref{thm:cycles-in-expanders} yields a $\sigma$-cycle in $G'$ (of length at most $r \cdot \frac{30r\log m}{\eps \lam} \le 1200r^3 (\log n)^2$), which corresponds to a tight cycle in $\HH'$, contradicting the assumption on $\HH$.
		It follows that $d \le 10^9 r^6 (\log n)^5 \le 10^9 r^{11} (\log N)^5$ (using $n \le N^r$), implying that
		\begin{equation*}
			e(\HH) 
			\le \frac{r^r}{r!} \cdot e(\HH') 
			\le \frac{10^9 r^{r+11}}{r!} \cdot N^{r-1} (\log N)^5 
			= O(N^{r-1} (\log N)^5),
		\end{equation*}
		as required.
	\end{proof}

\section{Conclusion} \label{sec:conclusion}
	We proved that the maximum possible number of edges in an $n$-vertex $r$-graph with no tight cycles is at most $O(n^{r-1} (\log n)^5)$, thus pinning down this extremal number up to a polylogarithmic factor. Specifically, we showed that every $r$-line-graph $G$ which is a $(\lam, d)$-expander, with $d$ sufficiently large, contains a $\sigma$-cycle. In fact, our proof implies that there is a $\sigma$-cycle between almost every two vertices in $G$. However, it is not clear if the same should hold for every two vertices in $G$ whose coordinate sets are disjoint. Even the following, slightly weaker question, remains open: in an $r$-line-graph which is a dense expander, can every two vertices which do not share coordinates be joined by a $\sigma$-path?

	It is natural to consider a similar question to the one discussed in this paper, where instead of forbidding all tight cycles, we forbid a tight cycle of given length $\ell$. This was addressed for $\ell$ which is linear in $n$ by Allen, B\"ottcher, Cooley and Mycroft \cite{allen2017tight}, and an unpublished result of Verstra\"ete considered the case $\ell = 24$ and $r = 3$. When $\ell$ is not divisible by $r$, there exist $n$-vertex $r$-graphs with $\Omega(n^r)$ edges and no tight cycles of length $\ell$; indeed, any dense $r$-partite $r$-graph would do. Conlon (see \cite{mubayi2011hypergraph}) asked the following question for fixed $\ell$ which is divisible by $r$.
	\begin{qn}[Conlon]
		Given $r \ge 3$, is there $c = c(r)$ such that whenever $\ell > r$ and $\ell$ is divisible by $r$, every $n$-vertex $r$-graph with no tight cycle of length $\ell$ has at most $O(n^{r-1+c/\ell})$ edges?
	\end{qn}

	We note that a lot more is known about the number of edges in an $r$-graph with no Berge or loose cycle of given lengths; see, e.g., \cite{furedi2014hypergraph,gyori2012hypergraphs,kostochka2015turan,frankl1987exact,gyori20123,jiang2018cycles,jiang2020linear}.

	\subsection*{Acknowledgements}

		I am grateful to Liana Yepremyan for introducing this problem to me.

\bibliography{tight-cycle}
\bibliographystyle{amsplain}
	
\end{document}